\newtheorem{theorem}{Theorem}[section]
\newtheorem{conjecture}[theorem]{Conjecture}
\newtheorem{corollary}[theorem]{Corollary}
\numberwithin{equation}{section}
\begin{document}

\title[]{A family of $q$-congruences modulo the square of a cyclotomic polynomial}


\author{Victor J. W. Guo}
\address{School of Mathematics and Statistics, Huaiyin Normal University, Huai'an 223300, Jiangsu, People's Republic of China}
\email{jwguo@hytc.edu.cn}

\thanks{The author was partially supported by the National Natural Science Foundation of China (grant 11771175).}

\subjclass[2010]{33D15, 11A07, 11B65}\keywords{cyclotomic polynomial; $q$-congruence; supercongruence; Watson's transformation}

\begin{abstract}Using Watson's terminating $_8\phi_7$ transformation formula,
we prove a family of $q$-congruences modulo the square of a cyclotomic polynomial,
which were originally conjectured by the author and Zudilin [J. Math. Anal. Appl. 475 (2019), 1636--646].
As an application, we deduce  two supercongruences modulo $p^4$ ($p$ is an odd prime)
and their $q$-analogues. This also partially confirms a special case of Swisher's (H.3) conjecture.

\end{abstract}

\maketitle

\section{Introduction}\label{sec1}
In 1997, Van Hamme \cite[(H.2)]{Hamme} proved the following
supercongruence: for any prime $p\equiv 3\pmod 4$,
\begin{equation}
\sum_{k=0}^{(p-1)/2} \frac{(\frac{1}{2})_k^3}{k!^3}
\equiv 0\pmod{p^2},   \label{eq:h2}
\end{equation}
where $(a)_n=a(a+1)\cdots(a+n-1)$ is the rising factorial. It is easy to see that
\eqref{eq:h2} is also true when the sum is over $k$ from $0$ to $p-1$, since
$(1/2)_k^3/k!^3\equiv 0\pmod{p^3}$ for $(p-1)/2<k\leqslant (p-1)/2$.
Nowadays various generalizations of \eqref{eq:h2} can be found in \cite{GZ15,Liu0,LR,GuoZu2,GuoZu3,Sun2,Sun}.
For example, Liu \cite{Liu0} proved that, for any prime $p\equiv 3\pmod{4}$ and positive integer $m$,
\begin{equation}
\sum_{k=0}^{mp-1} \frac{(\frac{1}{2})_k^3}{k!^3}
\equiv 0\pmod{p^2}.   \label{eq:liu}
\end{equation}

The first purpose of this note is to prove the following $q$-analogue of \eqref{eq:liu}, which was originally conjectured by the
author and Zudilin \cite[Conjecture 2]{GuoZu2}.
\begin{theorem}\label{main-1}
Let $m$ and $n$ be positive integers with $n\equiv 3\pmod{4}$. Then
\begin{align}
\sum_{k=0}^{mn-1}\frac{(1+q^{4k+1})(q^2;q^4)_k^3}{(1+q)(q^4;q^4)_k^3} q^k
&\equiv 0\pmod{\Phi_n(q)^2},
\label{eq:guozu-1}\\[5pt]
\sum_{k=0}^{mn+(n-1)/2}\frac{(1+q^{4k+1})(q^2;q^4)_k^3}{(1+q)(q^4;q^4)_k^3} q^k
&\equiv 0\pmod{\Phi_n(q)^2}. \label{eq:guozu-2}
\end{align}
\end{theorem}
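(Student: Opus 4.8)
The plan is to use Watson's terminating $_8\phi_7$ transformation to convert the truncated sums into closed forms (or nearly-closed $_4\phi_3$ sums) whose vanishing modulo $\Phi_n(q)^2$ can be read off directly. The natural first step is to recognize the summand $\frac{(1+q^{4k+1})(q^2;q^4)_k^3}{(1+q)(q^4;q^4)_k^3}q^k$ as a very-well-poised basic hypergeometric term: writing $a=q^2$ (base $q^4$), the factor $(1+q^{4k+1})/(1+q)$ is the very-well-poised quotient $\frac{1-aq^{4k}}{1-a}$ up to the substitution, and the three cubes of $(q^2;q^4)_k/(q^4;q^4)_k$ supply the remaining numerator/denominator parameters. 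So I would first cast $\sum_{k=0}^{N}$ as a terminating very-well-poised $_6\phi_5$ or, after inserting an extra pair of canceling parameters, a terminating $_8\phi_7$ in base $q^4$, with one of the upper parameters specialized to force termination at $k=N$ (namely $k = mn-1$ or $k = mn+(n-1)/2$).

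Second, I would apply Watson's transformation
$$
{}_8\phi_7\!\left[\begin{matrix} a, qa^{1/2}, -qa^{1/2}, b, c, d, e, q^{-N}\\ a^{1/2}, -a^{1/2}, aq/b, aq/c, aq/d, aq/e, aq^{N+1}\end{matrix};q,\frac{a^2q^{N+2}}{bcde}\right]
= \frac{(aq;q)_N (aq/de;q)_N}{(aq/d;q)_N (aq/e;q)_N}\,{}_4\phi_3\!\left[\begin{matrix} aq/bc, d, e, q^{-N}\\ aq/b, aq/c, deq^{-N}/a\end{matrix};q,q\right],
$$
with $q$ replaced by $q^4$, $a=q^2$, and $b,c,d,e$ chosen so that $aq/b$, $aq/c$ etc. recombine into the cube structure and the termination parameter $q^{-N}$ matches $mn$. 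The key arithmetic point is that after this transformation the right-hand $_4\phi_3$ (or a product of $q$-shifted factorials times a $_4\phi_3$) contains an explicit factor, typically of the shape $(q^{2};q^{4})_{?}$ or $(q^{4N};\ldots)$, that is divisible by $\Phi_n(q)$; combined with a second factor carrying another $\Phi_n(q)$ (coming from the $n\equiv 3\pmod 4$ hypothesis, which guarantees that $\Phi_n(q)\mid(q^2;q^4)_{(n-?)/?}$ type products), one gets divisibility by $\Phi_n(q)^2$. I would handle the two sums \eqref{eq:guozu-1} and \eqref{eq:guozu-2} in parallel: they differ only in the truncation point, hence in the choice of the termination parameter (or an additional tail argument showing the extra terms from $mn$ up to $mn+(n-1)/2$ are individually divisible by $\Phi_n(q)^2$, much as in the classical passage from $(p-1)/2$ to $p-1$ in \eqref{eq:h2}).

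I expect the main obstacle to be the bookkeeping in choosing the free parameters $b,c,d,e$ so that Watson's $_8\phi_7$ genuinely specializes to our cubed series while simultaneously making the termination occur exactly at $mn$ (rather than at $n$ or $mn-1$), and then extracting the two separate $\Phi_n(q)$ factors on the transformed side --- one of these typically requires a limiting/l'H\^opital-type argument because a naive specialization produces a $0/0$, and the surviving derivative term is precisely where the \emph{second} power of $\Phi_n(q)$ is born. A secondary technical point is verifying that the denominators appearing after the transformation (the $q$-shifted factorials $(aq/d;q^4)_N$, $(aq/e;q^4)_N$) are coprime to $\Phi_n(q)$, so that divisibility of the numerator by $\Phi_n(q)^2$ transfers to the whole expression; this is a standard but necessary check using the fact that $\Phi_n(q)\mid 1-q^{j}$ iff $n\mid j$. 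Once the closed-form side is in hand, the supercongruences modulo $p^4$ and the (H.3) consequence follow by the usual specialization $q\to 1$ (with $n=p$) together with the standard estimate that the "extra" hypergeometric factor contributes an additional power of $p$.
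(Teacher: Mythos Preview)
Your approach is essentially the paper's: write the sum as a terminating very-well-poised ${}_8\phi_7$ in base $q^4$ with $a=q^2$, apply Watson's transformation, and read off $\Phi_n(q)^2$ from the prefactor while checking that the ${}_4\phi_3$ summands have $\Phi_n(q)$-free denominators. Two small corrections to your expectations: first, the paper treats \eqref{eq:guozu-2} directly (terminating at $N=mn+(n-1)/2$, with the specialization $b=d=q^2$, $c=q$, $e=q^{4+(4m+2)n}$) and then deduces \eqref{eq:guozu-1} via the tail argument you mention, rather than handling both truncations by separate Watson specializations; second, no $0/0$ or l'H\^opital step arises---both powers of $\Phi_n(q)$ come from a direct count in the prefactor
\[
\frac{(q^6;q^4)_N\,(q^{-(4m+2)n};q^4)_N}{(q^4;q^4)_N\,(q^{2-(4m+2)n};q^4)_N},
\]
where (using $n\equiv 3\pmod 4$) each numerator factorial contains $m+1$ factors of the form $1-q^{an}$ while each denominator factorial contains only $m$.
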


Here and in what follows, the {\em $q$-shifted factorial} is defined by $(a;q)_0=1$ and $(a;q)_n=(1-a)(1-aq)\cdots (1-aq^{n-1})$ for
$n\geqslant 1$, and the $n$-th {\em cyclotomic polynomial} $\Phi_n(q)$ is defined as
\begin{align*}
\Phi_n(q)=\prod_{\substack{1\leqslant k\leqslant n\\ \gcd(n,k)=1}}(q-\zeta^k),
\end{align*}
where $\zeta$ is an $n$-th primitive root of unity. Moreover, the {\em $q$-integer} is given by
$[n]=[n]_q=1+q+\cdots+q^{n-1}$.

The $r=1$ case of \eqref{eq:guozu-1} was first conjectured by the author and Zudilin \cite[Conjecture 4.13]{GuoZu} and has already
been proved by themselves in a recent paper \cite{GuoZu3}.
For some other recent progress on $q$-congruences, the reader may consult
\cite{GG,Guo-m3,GL,GS2,GS3,GZ15,GuoZu2,GuoZu3,NP}.

In 2016, Swisher \cite[(H.3) with $r=2$]{Swisher} conjectured that, for primes $p\equiv 3\pmod{4}$ and $p>3$,
\begin{equation}
\sum_{k=0}^{(p^2-1)/2} \frac{(\frac{1}{2})_k^3}{k!^3}
\equiv p^2\pmod{p^5},   \label{eq:h3}
\end{equation}
The second purpose of this note is to the following $q$-congruences related to \eqref{eq:h3} modulo $p^4$.
\begin{theorem}\label{main-2}
Let $n\equiv 3\pmod{4}$ be a positive integer. Then, modulo $\Phi_n(q)^2\Phi_{n^2}(q)^2$, we have
\begin{align}
\sum_{k=0}^{(n^2-1)/2}\frac{(1+q^{4k+1})(q^2;q^4)_k^3}{(1+q)(q^4;q^4)_k^3}q^k
\equiv \dfrac{[n^2]_{q^2}(q^3;q^4)_{(n^2-1)/2}} {(q^5;q^4)_{(n^2-1)/2}} q^{(1-n^2)/2}, \label{eq:main-2-1}\\[5pt]
\sum_{k=0}^{n^2-1}\frac{(1+q^{4k+1})(q^2;q^4)_k^3}{(1+q)(q^4;q^4)_k^3}q^k
\equiv \dfrac{[n^2]_{q^2}(q^3;q^4)_{(n^2-1)/2}} {(q^5;q^4)_{(n^2-1)/2}} q^{(1-n^2)/2}.  \label{eq:main-2-2}
\end{align}
\end{theorem}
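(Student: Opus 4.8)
The plan is to verify the two asserted congruences separately modulo $\Phi_n(q)^2$ and modulo $\Phi_{n^2}(q)^2$ and then combine them by the Chinese remainder theorem, which is legitimate because $\Phi_n(q)$ and $\Phi_{n^2}(q)$ are coprime. The part modulo $\Phi_n(q)^2$ follows almost at once from Theorem~\ref{main-1}: taking $m=n$ in \eqref{eq:guozu-1} shows the left-hand side of \eqref{eq:main-2-2} is $\equiv0\pmod{\Phi_n(q)^2}$, and taking $m=(n-1)/2$ in \eqref{eq:guozu-2} does the same for \eqref{eq:main-2-1}, since then $mn+(n-1)/2=(n^2-1)/2$ and $(n-1)/2$ is a positive integer because $n\equiv3\pmod4$. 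It remains to check that the common right-hand side of \eqref{eq:main-2-1}--\eqref{eq:main-2-2} is itself $\equiv0\pmod{\Phi_n(q)^2}$; this is an elementary count of orders of vanishing at a primitive $n$th root of unity, in which $[n^2]_{q^2}$ contributes exactly one, while, using $\gcd(n,4)=1$ together with $n\equiv3\pmod4$, the order of $(q^3;q^4)_{(n^2-1)/2}$ exceeds that of the denominator $(q^5;q^4)_{(n^2-1)/2}$ by exactly one.

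The substantial part is the congruence modulo $\Phi_{n^2}(q)^2$, which I would obtain by the creative-microscope method. Introduce a parameter $a$ and set
\begin{equation*}
f(a)=\sum_{k=0}^{(n^2-1)/2}\frac{(1+q^{4k+1})\,(q^2;q^4)_k\,(aq^2;q^4)_k\,(q^2/a;q^4)_k}{(1+q)\,(q^4;q^4)_k\,(aq^4;q^4)_k\,(q^4/a;q^4)_k}\,q^k ,
\end{equation*}
so that $f(1)$ is the left-hand side of \eqref{eq:main-2-1} and $f(a)=f(1/a)$. Clearing denominators, $(aq^4;q^4)_{(n^2-1)/2}(q^4/a;q^4)_{(n^2-1)/2}f(a)$ is a Laurent polynomial in $a$ whose $a$-degree one needs to control. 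The key step is to apply Watson's terminating ${}_{8}\phi_{7}$ transformation, in base $q^4$ and with a parameter tuned so that the series terminates at $k=(n^2-1)/2$: it rewrites $f(a)$ as a ratio of $q$-shifted factorials times a terminating ${}_{4}\phi_{3}$, and at $a=q^{n^2}$ this ${}_{4}\phi_{3}$ collapses (or is summable in closed form), giving
\begin{equation*}
f(q^{n^2})=\frac{[n^2]_{q^2}\,(q^3;q^4)_{(n^2-1)/2}}{(q^5;q^4)_{(n^2-1)/2}}\,q^{(1-n^2)/2};
\end{equation*}
by the symmetry $f(a)=f(1/a)$ the same value is taken at $a=q^{-n^2}$. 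Since the difference of $f(a)$ and the right-hand side above, after clearing denominators, vanishes at the two points $a=q^{\pm n^2}$ and has small enough degree in $a$, it is divisible by $(1-aq^{n^2})(a-q^{n^2})$; setting $a=1$ and using $\Phi_{n^2}(q)^2\mid(1-q^{n^2})^2$ gives \eqref{eq:main-2-1} modulo $\Phi_{n^2}(q)^2$.

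Combining the two parts by the Chinese remainder theorem gives \eqref{eq:main-2-1}. For \eqref{eq:main-2-2} the factor $\Phi_n(q)^2$ is again covered by Theorem~\ref{main-1} with $m=n$, and modulo $\Phi_{n^2}(q)^2$ it is enough to observe that every summand with index $k$ in the range $(n^2-1)/2<k\leqslant n^2-1$ is divisible by $\Phi_{n^2}(q)^3$: for such $k$ exactly one factor of $(q^2;q^4)_k$, namely the one equal to $1-q^{2n^2}$, is divisible by $\Phi_{n^2}(q)$, whereas $(q^4;q^4)_k$ and $1+q^{4k+1}$ are coprime to it. Hence the tail of the sum beyond $(n^2-1)/2$ vanishes modulo $\Phi_{n^2}(q)^2$, so \eqref{eq:main-2-2} follows from \eqref{eq:main-2-1}, and combining with the $\Phi_n(q)^2$ part finishes the proof. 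I expect the main obstacle to be the closed-form evaluation of $f(q^{n^2})$ via Watson's transformation---choosing the eight parameters so that the resulting ${}_{4}\phi_{3}$ is summable and identifying its value with the asserted right-hand side---and, secondarily, making the bound on the $a$-degree of $f(a)$ sharp enough for the interpolation to close; the rest is bookkeeping with cyclotomic polynomials and a direct appeal to Theorem~\ref{main-1}.
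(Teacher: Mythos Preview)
Your overall architecture matches the paper's exactly: split modulo $\Phi_n(q)^2$ and $\Phi_{n^2}(q)^2$ separately and combine by coprimality. The $\Phi_n(q)^2$ part---invoking Theorem~\ref{main-1} with $m=(n-1)/2$ for \eqref{eq:main-2-1} and $m=n$ for \eqref{eq:main-2-2}, together with the factor count showing the right-hand side vanishes to order two at a primitive $n$th root of unity---is precisely what the paper does, and your reduction of \eqref{eq:main-2-2} to \eqref{eq:main-2-1} via the tail vanishing modulo $\Phi_{n^2}(q)^3$ is also the standard observation.

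The only difference is in the $\Phi_{n^2}(q)^2$ step. The paper does not run a creative-microscope argument here at all: it simply quotes the congruence
\[
\sum_{k=0}^{(N-1)/2}\frac{(1+q^{4k+1})(q^2;q^4)_k^3}{(1+q)(q^4;q^4)_k^3}\,q^k
\equiv\frac{[N]_{q^2}(q^3;q^4)_{(N-1)/2}}{(q^5;q^4)_{(N-1)/2}}\,q^{(1-N)/2}\pmod{\Phi_N(q)^2},
\]
valid for every positive odd $N$ and already established in \cite{GuoZu3}, and substitutes $N=n^2$. Your plan with the parametrized sum $f(a)$, the evaluation at $a=q^{\pm n^2}$, and the degree/interpolation argument is essentially the proof of that very result in \cite{GuoZu3}, so it would succeed, but it reproves a theorem that is available off the shelf. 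The step you single out as uncertain---the closed-form evaluation of $f(q^{n^2})$---is exactly what citing \cite{GuoZu3} lets one skip. In short: your route is self-contained but longer; the paper's is a one-line appeal to a known $q$-congruence.
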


Let $n=p\equiv 3\pmod{4}$ be a prime and taking $q\to1$ in Theorem \ref{main-2}. Then $\Phi_p(1)=\Phi_{p^2}(1)=p$, and
\begin{align*}
\lim_{q\to 1}\dfrac{(q^{3};q^4)_{(p^2-1)/2}} {(q^5;q^4)_{(p^2-1)/2}}
=\prod_{k=1}^{(p^2-1)/2}\frac{4k-1}{4k+1}=\frac{(\frac{3}{4})_{(p^2-1)/2}}{(\frac{5}{4})_{(p^2-1)/2}}.
\end{align*}
and we obtain the following conclusion.

\begin{corollary}Let $p\equiv 3\pmod{4}$ be a prime. Then
\begin{align}
\sum_{k=0}^{(p^2-1)/2} \frac{(\frac{1}{2})_k^3}{k!^3}
&\equiv p^2\frac{(\frac{3}{4})_{(p^2-1)/2}}{(\frac{5}{4})_{(p^2-1)/2}} \pmod{p^4},   \label{eq:h3-1} \\[5pt]
\sum_{k=0}^{p^2-1} \frac{(\frac{1}{2})_k^3}{k!^3}
&\equiv p^2\frac{(\frac{3}{4})_{(p^2-1)/2}}{(\frac{5}{4})_{(p^2-1)/2}} \pmod{p^4}.   \label{eq:h3-2}
\end{align}
\end{corollary}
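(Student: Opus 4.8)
The plan is to obtain the corollary as the $q\to1$ specialization of Theorem~\ref{main-2} at $n=p$. First I would record the elementary fact that $\Phi_{\ell^a}(1)=\ell$ for every prime $\ell$; since $\Phi_p(q)$ and $\Phi_{p^2}(q)$ are coprime monic polynomials, the modulus $\Phi_p(q)^2\Phi_{p^2}(q)^2$ in \eqref{eq:main-2-1}--\eqref{eq:main-2-2} degenerates to $p^4$ at $q=1$ with no loss in the exponent.

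Next I would compute the limits. From $\lim_{q\to1}(q^a;q^d)_k/(1-q)^k=d^k(a/d)_k$ one gets $\dfrac{(1+q^{4k+1})(q^2;q^4)_k^3}{(1+q)(q^4;q^4)_k^3}q^k\to\dfrac{(\frac{1}{2})_k^3}{k!^3}$ (the powers of $4$ cancel, while $(1+q^{4k+1})/(1+q)\to1$ and $q^k\to1$), so the two left-hand sides tend to $\sum_{k=0}^{(p^2-1)/2}(\frac{1}{2})_k^3/k!^3$ and $\sum_{k=0}^{p^2-1}(\frac{1}{2})_k^3/k!^3$; on the right, $[n^2]_{q^2}=(1-q^{2n^2})/(1-q^2)\to p^2$, $q^{(1-n^2)/2}\to1$, and $(q^3;q^4)_{(p^2-1)/2}/(q^5;q^4)_{(p^2-1)/2}\to\prod_{k=1}^{(p^2-1)/2}\frac{4k-1}{4k+1}=(\frac{3}{4})_{(p^2-1)/2}/(\frac{5}{4})_{(p^2-1)/2}$, exactly as displayed before the statement. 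Hence both right-hand sides tend to $p^2(\frac{3}{4})_{(p^2-1)/2}/(\frac{5}{4})_{(p^2-1)/2}$.

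The only step demanding genuine care — the one I expect to be the main obstacle — is justifying that the $q$-congruence of Theorem~\ref{main-2} may legitimately be evaluated at $q=1$ to give a congruence modulo $p^4$. Write the difference of the two sides of \eqref{eq:main-2-1} in lowest terms as $A(q)/B(q)$ with $A,B\in\mathbb{Z}[q]$; it is regular at $q=1$ (both sides are), so $B(1)\neq0$, and Theorem~\ref{main-2} gives $\Phi_p(q)^2\Phi_{p^2}(q)^2\mid A(q)$, whence — the cyclotomic polynomials being monic — $A(q)/(\Phi_p(q)^2\Phi_{p^2}(q)^2)\in\mathbb{Z}[q]$ and $\mathrm{LHS}(1)-\mathrm{RHS}(1)=p^4\cdot Q(1)/B(1)$ for some $Q\in\mathbb{Z}[q]$. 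It therefore suffices to check $p\nmid B(1)$, i.e.\ that $B(q)$ contains no cyclotomic factor $\Phi_{p^a}(q)$. On the left side of \eqref{eq:main-2-1} this holds because $(q^2;q^4)_k/(q^4;q^4)_k$ becomes a polynomial (a $q$-binomial coefficient) after multiplication by $\prod_{i=1}^k(1+q^{2i})^2$, whose zeros are roots of unity of even order and so never primitive $p^a$-th roots of unity for odd $p$; thus each summand, and hence the whole left side, is regular at every primitive $p^a$-th root of unity. On the right side the only denominator is $(q^5;q^4)_{(p^2-1)/2}$, all of whose exponents lie below $p^3$, so it involves $\Phi_{p^a}(q)$ only for $a\leqslant2$, and the multiplicities of $\Phi_p(q)$ and $\Phi_{p^2}(q)$ occurring in it are absorbed by the numerator, since $[n^2]_{q^2}=\prod_{d\mid n^2,\,d>1}\Phi_d(q^2)$ supplies $\Phi_p(q)$ and $\Phi_{p^2}(q)$ each once and $(q^3;q^4)_{(p^2-1)/2}$ supplies the remaining powers of $\Phi_p(q)$ — a short divisibility count. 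Thus $p\nmid B(1)$, so $\mathrm{LHS}(1)\equiv\mathrm{RHS}(1)\pmod{p^4}$; since $\mathrm{LHS}(1)=\sum\binom{2k}{k}^3/64^k$ is plainly a $p$-adic integer, so is $\mathrm{RHS}(1)=p^2(\frac{3}{4})_{(p^2-1)/2}/(\frac{5}{4})_{(p^2-1)/2}$ (the hypothesis $p\equiv3\pmod4$ makes $\prod_{k=1}^{(p^2-1)/2}(4k-1)/(4k+1)$ a $p$-adic unit), and \eqref{eq:h3-1}, \eqref{eq:h3-2} follow. All of this bookkeeping is routine and parallels the corresponding passages of \cite{GuoZu2,GuoZu3}; the limit computations themselves present no difficulty.
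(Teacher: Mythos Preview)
Your approach is exactly the paper's: set $n=p$ in Theorem~\ref{main-2} and let $q\to1$, using $\Phi_p(1)=\Phi_{p^2}(1)=p$ and the limit $(q^3;q^4)_{(p^2-1)/2}/(q^5;q^4)_{(p^2-1)/2}\to(\frac34)_{(p^2-1)/2}/(\frac54)_{(p^2-1)/2}$. The paper treats this passage as routine and records only those two evaluations; you supply the careful bookkeeping (that the reduced denominator $B(q)$ is a product of cyclotomic polynomials none of which is a $\Phi_{p^a}$, so that $p\nmid B(1)$), which is correct and is the standard way such specializations are justified in this literature.
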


Comparing \eqref{eq:h3} and \eqref{eq:h3-1}, we would like to propose the following conjecture.
\begin{conjecture}\label{conj:first}
Let $p\equiv 3\pmod{4}$ be a prime and $r$ a positive integer. Then
\begin{align*}
\prod_{k=1}^{(p^{2r}-1)/2}\frac{4k-1}{4k+1}\equiv 1\pmod{p^2}.
\end{align*}
\end{conjecture}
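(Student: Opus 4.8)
The plan is to peel factors of $p$ off the product one scale at a time, exploiting the self-similarity of the interval $[1,2p^{2r}-1]$ under division by $p$, and then to reduce everything to a single elementary cancellation modulo $p$. First I would put the product in closed form. With $N=(p^{2r}-1)/2$, the numerators $4k-1$ ($1\le k\le N$) are precisely the integers $\equiv 3\pmod 4$ in $[1,2p^{2r}-1]$ and the denominators $4k+1$ precisely those $\equiv 1\pmod 4$ (using $2p^{2r}-1\equiv 1\pmod 4$), so
$$\prod_{k=1}^{(p^{2r}-1)/2}\frac{4k-1}{4k+1}=\frac{B_{2r}}{A_{2r}},\qquad A_{s}:=\prod_{\substack{1\le j\le 2p^{s}-1\\ j\equiv 1\,(4)}}j,\quad B_{s}:=\prod_{\substack{1\le j\le 2p^{s}-1\\ j\equiv 3\,(4)}}j .$$

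The next step is the recursion. In $A_{2r}$ the factors divisible by $p$ are the $pt$ with $t\equiv 3\pmod 4$ and $1\le t\le 2p^{2r-1}-1$ — note that $p\equiv 3\pmod 4$ is exactly what swaps the residue class mod $4$ here — so $A_{2r}=G_{2r}^{+}\cdot p^{(p^{2r-1}-1)/2}\cdot B_{2r-1}$, where $G_{s}^{+}$ (resp.\ $G_{s}^{-}$) is the product of the integers $\equiv 1\pmod 4$ (resp.\ $\equiv 3\pmod 4$) in $[1,2p^{s}-1]$ coprime to $p$; symmetrically $B_{2r}=G_{2r}^{-}\cdot p^{(p^{2r-1}+1)/2}\cdot A_{2r-1}$. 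Iterating down to $A_{0}=B_{0}=1$ and writing $H_{s}:=G_{s}^{-}/G_{s}^{+}$ gives
$$\frac{B_{2r}}{A_{2r}}=p^{\,v_{p}(B_{2r})-v_{p}(A_{2r})}\ \prod_{s=1}^{2r}H_{s}^{(-1)^{s}} .$$
A two-step recurrence for $d_{s}:=v_{p}(A_{s})-v_{p}(B_{s})$ (namely $d_{s}=-1-d_{s-1}$, from the two exponents $\tfrac{p^{s-1}\mp1}{2}$ above) forces $d_{2r}=d_{0}=0$, so the prefactor is $1$ and it remains to show $\prod_{s=1}^{2r}H_{s}^{(-1)^{s}}\equiv 1\pmod{p^{2}}$.

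To evaluate $H_{s}$ modulo $p^{2}$ I would split $[1,2p^{s}-1]$ into the $p^{s-1}$ blocks $\{2pb+1,\dots,2pb+2p-1\}$. Since $p\equiv 3\pmod 4$, inside the $b$-th block the two residue classes mod $4$ are the $2pb+t$ with $t\equiv 1-2b$ and $t\equiv 3-2b\pmod 4$, and the unique multiple of $p$ in the block, $p(2b+1)$, lands in the class $\equiv 3$ when $b$ is even and $\equiv 1$ when $b$ is odd; a first-order expansion $2pb+t\equiv t(1+2pbt^{-1})$ then shows the block contributes $\mu^{(-1)^{b}}(1+(-1)^{b}2pb\,\delta)$ modulo $p^{2}$, where $\mu:=G_{1}^{-}/G_{1}^{+}$ and $\delta:=\sum_{t\equiv 3(4),\,t\ne p}t^{-1}-\sum_{t\equiv 1(4)}t^{-1}$ (sums over odd $t\in[1,2p-1]$). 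Multiplying over $b$ and using $\sum_{b=0}^{p^{s-1}-1}(-1)^{b}=1$ and $\sum_{b=0}^{p^{s-1}-1}(-1)^{b}b=\tfrac12(p^{s-1}-1)$ yields $H_{s}\equiv \mu\bigl(1+p\,\delta\,(p^{s-1}-1)\bigr)\pmod{p^{2}}$. Now the cancellation: the odd integers in $[1,2p-1]$ form a complete residue system mod $p$, so $\sum_{t\ \mathrm{odd},\,t\ne p}t^{-1}\equiv\sum_{u=1}^{p-1}u\equiv 0$; and the fixed-point-free involution $t\mapsto 2p-t$ on $\{t\equiv 1\,(4):1\le t\le 2p-1\}$ pairs $t^{-1}$ with $(2p-t)^{-1}\equiv -t^{-1}$, so $\sum_{t\equiv 1(4)}t^{-1}\equiv 0$ and hence $\sum_{t\equiv 3(4),\,t\ne p}t^{-1}\equiv 0$, giving $\delta\equiv 0\pmod p$. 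Therefore $H_{s}\equiv\mu\pmod{p^{2}}$ for all $s$, and since $\{1,\dots,2r\}$ contains equally many even and odd integers,
$$\prod_{s=1}^{2r}H_{s}^{(-1)^{s}}\equiv\mu^{\sum_{s=1}^{2r}(-1)^{s}}=\mu^{0}=1\pmod{p^{2}},$$
which finishes the proof.

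The step I expect to be the real crux is the recursion: recognizing the self-similarity (each division by $p$ flips the residue class mod $4$, which is why the hypothesis $p\equiv 3\pmod 4$ and the even exponent $2r$ are both needed) and carrying the valuation bookkeeping far enough to see that the powers of $p$ extracted from numerator and denominator cancel. The other essential input is the congruence $\delta\equiv 0\pmod p$ in the last step; everything else is routine expansion and counting. A subtlety to watch is the block decomposition at the first and last blocks and the single factor with $p^{2}\mid j$ that occurs when $2b+1=p$: these do not change any congruence class but must be tracked in the exact identities underlying the recursion.
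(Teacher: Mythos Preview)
The paper does not prove this statement at all: it is explicitly posed as an open conjecture, with the sole remark that the $r=1$ case is equivalent to Swisher's congruence $\sum_{k=0}^{(p^2-1)/2}(\tfrac12)_k^3/k!^3\equiv p^2\pmod{p^5}$ holding modulo $p^4$. There is therefore no ``paper's proof'' to compare against.

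Your argument, by contrast, is a complete and correct proof of the conjecture. The exact recursion $B_s/A_s=H_s\cdot p\cdot (B_{s-1}/A_{s-1})^{-1}$ is verified by the factor count you give (the class swap $1\leftrightarrow 3$ under multiplication by $p$ is precisely where $p\equiv 3\pmod 4$ enters), and iterating it an even number $2r$ of steps kills the stray powers of $p$ and leaves $\prod_{s=1}^{2r}H_s^{(-1)^s}$. The block expansion is sound: since $(2pb)^2\equiv 0\pmod{p^2}$ for every $b$, the linearisation $\prod_t(2pb+t)\equiv(\prod_t t)(1+2pb\sum_t t^{-1})$ is valid in each block, yielding $H_s\equiv\mu\bigl(1+p\delta(p^{s-1}-1)\bigr)\pmod{p^2}$. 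Your evaluation $\delta\equiv 0\pmod p$ is also correct: the odd residues in $[1,2p-1]\setminus\{p\}$ hit each nonzero class mod $p$ once, so $S_++S_-\equiv 0$; and the involution $t\mapsto 2p-t$ on $\{t\equiv 1\pmod 4,\ 1\le t\le 2p-1\}$ is fixed-point-free (the only candidate fixed point $t=p$ lies in the other class) and acts on a set of even size $(p+1)/2$, forcing $S_+\equiv 0$. Hence every $H_s\equiv\mu\pmod{p^2}$, the alternating product collapses to $\mu^0=1$, and the conjecture follows. The ``subtlety'' you flag about the factor $p(2b+1)$ with $p\mid(2b+1)$ is harmless: such terms are excluded from $G_s^{\pm}$ by definition and are absorbed exactly into the next level of the recursion.

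In short, you have settled what the paper leaves open.
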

Note that the $r=1$
case is equivalent to say that \eqref{eq:h3} is true modulo $p^4$.

\section{Proof of Theorem \ref{main-1}}
We need to use Watson's terminating $_8\phi_7$ transformation formula (see \cite[Appendix (III.18)]{GR}):
\cite[Section 1]{GR}:
\begin{align}
& _{8}\phi_{7}\!\left[\begin{array}{cccccccc}
a,& qa^{\frac{1}{2}},& -qa^{\frac{1}{2}}, & b,    & c,    & d,    & e,    & q^{-n} \\
  & a^{\frac{1}{2}}, & -a^{\frac{1}{2}},  & aq/b, & aq/c, & aq/d, & aq/e, & aq^{n+1}
\end{array};q,\, \frac{a^2q^{n+2}}{bcde}
\right] \notag\\[5pt]
&\quad =\frac{(aq;q)_n(aq/de;q)_n}
{(aq/d;q)_n(aq/e;q)_n}
\,{}_{4}\phi_{3}\!\left[\begin{array}{c}
aq/bc,\ d,\ e,\ q^{-n} \\
aq/b,\, aq/c,\, deq^{-n}/a
\end{array};q,\, q
\right],  \label{eq:8phi7}
\end{align}
where the basic hypergeometric ${}_{m+1}\phi_r$ series with $r$ upper parameters
$a_1,\dots,a_{m+1}$, $s$ lower parameters $b_1,\dots,b_r$, base $q$ and
argument $z$ is defined as
\begin{equation*}
{}_{m+1}\phi_{r}\!\left[\begin{matrix}
a_1,a_2,\dots,a_{m+1}\\b_1,\dots,b_r
\end{matrix};q,z\right]:=\sum_{k=0}^\infty
\frac{(a_1;q)_k(a_2;q)_k \dots(a_{m+1};q)_k}{(q;q)_k(b_1;q)_k\cdots(b_r;q)_k} z^k.
\end{equation*}

The left-hand side of \eqref{eq:guozu-2} with $r\geqslant 0$ can be written as the following terminating $_8\phi_7$ series:
\begin{equation}
_{8}\phi_{7}\!\left[\begin{array}{cccccccc}
q^2,& q^5, & -q^5, & q^2,    & q,      & q^2,      & q^{4+(4m+2)n},  & q^{2-(4m+2)n} \\
    & q,   & -q,   & q^4,    & q^5,    & q^4,      & q^{2-(4m+2)n},  & q^{4+(4m+2)n}
\end{array};q^4,\, q
\right].   \label{eq:8phi7-1}
\end{equation}
By Watson's transformation formula  \eqref{eq:8phi7} with $q\mapsto q^4$, $a=b=d=q^2$,  $c=q$, $e=q^{4+(4m+2)n}$, and $n\mapsto mn+(n-1)/2$,
we see that \eqref{eq:8phi7-1} is equal to
\begin{equation}
\frac{(q^6;q^4)_{mn+(n-1)/2}(q^{-(4m+2)n};q^4)_{mn+(n-1)/2}}
{(q^4;q^4)_{mn+(n-1)/2}(q^{2-(4m+2)n};q^4)_{mn+(n-1)/2}}
\,{}_{4}\phi_{3}\!\left[\begin{array}{c}
q^3,\ q^2,\, q^{4+(4m+2)n},\ q^{2-(4m+2)n}  \\
q^4,\quad q^5,\quad q^6
\end{array};q^4,\, q^4
\right].  \label{eq:4phi3-2}
\end{equation}
It is not difficult to see that there are exactly $m+1$ factors of the form $1-q^{an}$ ($a$ is an integer) among
the $mn+(n-1)/2$ factors of  $(q^6;q^4)_{mn+(n-1)/2}$. So are $(q^{-(4m+2)n};q^4)_{mn+(n-1)/2}$.
But there are only $r$ factors of the form $1-q^{an}$ ($a$ is an integer) in each of
$(q^4;q^4)_{mn+(n-1)/2}$ and $(q^{2-(4m+2)n};q^4)_{mn+(n-1)/2}$. Since $\Phi_n(q)$ is a factor of $1-q^N$
if and only if $n$ divides $N$, we conclude that the fraction before the $_4\phi_3$
series is congruent to $0$ modulo $\Phi_n(q)^2$. Moreover, for any integer $x$, let $f_n(x)$ be the least non-negative
integer $k$ such that $(q^x,q^4)_k\equiv 0$ modulo $\Phi_n(q)$. Since $n\equiv 3\pmod{4}$,
we have $f_n(2)=(n+1)/2$, $f_n(3)=(n+1)/4$, $f_n(4)=n$, $f_n(5)=(3n-1)/4$, and $f_n(6)=(n-1)/2$.
It follows that the denominator of the reduced form of the $k$-th summand in the $_4\phi_3$ summation
$$
\frac{(q^3;q^4)_k (q^2;q^4)_k(q^{4+(4m+2)n};q^4)_k(q^{2-(4m+2)n};q^4)_k}
{(q^4;q^4)_k^2(q^5;q^4)_k(q^6;q^4)_k}q^{4k}
$$
is always relatively prime to $\Phi_n(q)$ for any non-negative integer $k$.
This proves that \eqref{eq:4phi3-2} (i.e. \eqref{eq:8phi7-1}) is congruent to $0$ modulo $\Phi_n(q)^2$,
thus establishing \eqref{eq:guozu-2} for $m\geqslant 0$.

It is easy to see that $(q^2;q^4)_k^3/(q^4;q^4)_k^3$ is congruent to $0$ modulo $\Phi_n(q)^3$ for
$mn+(n-1)/2\leqslant(m+1)n-1$. Therefore, the $q$-congruence \eqref{eq:guozu-1} with $m\mapsto m+1$
follows from \eqref{eq:guozu-2}.

\section{Proof of Theorem \ref{main-2}}
The author and Zudilin \cite[Theorem 1.1]{GuoZu3} proved that, for any positive odd integer $n$,
\begin{equation}
\sum_{k=0}^{(n-1)/2}\frac{(1+q^{4k+1})\,(q^2;q^4)_k^3}{(1+q)\,(q^4;q^4)_k^3}\,q^{k}
\equiv\dfrac{[n]_{q^2}(q^3;q^4)_{(n-1)/2}}{(q^5;q^4)_{(n-1)/2}}\,q^{(1-n)/2}
\pmod{\Phi_n(q)^2},  \label{eq:guozu-3}
\end{equation}
which is also true when the sum on the left-hand side of \eqref{eq:guozu-3} is over $k$ from $0$ to $n-1$.
Replacing $n$ by $n^2$ in \eqref{eq:guozu-3} and its equivalent form, we see that the $q$-congruences \eqref{eq:main-2-1} and \eqref{eq:main-2-2}
hold modulo $\Phi_{n^2}(q)^2$.

It is easy to see that, for $n\equiv 3\pmod{4}$,
$$
\dfrac{[n^2]_{q^2}(q^3;q^4)_{(n^2-1)/2}} {(q^5;q^4)_{(n^2-1)/2}} q^{(1-n^2)/2}\equiv 0\pmod{\Phi_n(q)^2}
$$
because $[n^2]_{q^2}=(1-q^{n^2})/(1-q^2)$ is divisible by $\Phi_n(q)$, and $(q^3;q^4)_{(n^2-1)/2}$ contains $(n+1)/2$
factor of the for $1-q^{an}$ ($a$ is an integer), while $(q^5;q^4)_{(n^2-1)/2}$ only has $(n-1)/2$ such factors.
Meanwhile, by Theorem \ref{main-1}, the left-hand sides of \eqref{eq:main-2-1} and \eqref{eq:main-2-2}
are both congruent to $0$ modulo $\Phi_n(q)^2$ since $(n^2-1)/2=(n-1)n/2+(n-1)/2$.
This proves that the $q$-congruences \eqref{eq:main-2-1} and \eqref{eq:main-2-2} also
hold modulo $\Phi_n(q)^2$. Since the polynomials $\Phi_n(q)$ and $\Phi_{n^2}(q)$ are
relatively prime, we finish the proof of the theorem.

\section{Discussion}
Swisher's (H.3) conjecture also indicates that, for positive integer $r$ and primes $p\equiv 3\pmod{4}$ with $p>3$, we have
\begin{equation}
\sum_{k=0}^{(p^{2r}-1)/2} \frac{(\frac{1}{2})_k^3}{k!^3}
\equiv p^{2r}\pmod{p^{2r+3}}.  \label{eq:h3-new}
\end{equation}
Motivated by \eqref{eq:h3-new}, we shall give the following generalization of Theorem \ref{main-2}.
\begin{theorem}\label{main-3}
Let $n$ and $r$ be positive integers with $n\equiv 3\pmod{4}$. Then, modulo $\Phi_{n^{2r}}(q)^2\prod_{j=1}^{r}\Phi_{n^{2j-1}}(q)^2$, we have
\begin{align}
\sum_{k=0}^{(n^{2r}-1)/2}\frac{(1+q^{4k+1})(q^2;q^4)_k^3}{(1+q)(q^4;q^4)_k^3}q^k
\equiv \dfrac{[n^{2r}]_{q^2}(q^3;q^4)_{(n^{2r}-1)/2}} {(q^5;q^4)_{(n^{2r}-1)/2}} q^{(1-n^{2r})/2}, \label{eq:main-3-1}\\[5pt]
\sum_{k=0}^{n^{2r}-1}\frac{(1+q^{4k+1})(q^2;q^4)_k^3}{(1+q)(q^4;q^4)_k^3}q^k
\equiv \dfrac{[n^{2r}]_{q^2}(q^3;q^4)_{(n^{2r}-1)/2}} {(q^5;q^4)_{(n^{2r}-1)/2}} q^{(1-n^{2r})/2}.    \label{eq:main-3-2}
\end{align}
\end{theorem}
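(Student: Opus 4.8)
The plan is to run the three-step argument used for Theorem \ref{main-2}, now carrying all of the moduli $\Phi_{n^{2r}}(q)^2$ and $\Phi_{n^{2j-1}}(q)^2$ ($1\leqslant j\leqslant r$) at once and gluing the pieces at the end. The cyclotomic polynomials $\Phi_{n^{2r}}(q),\Phi_{n}(q),\Phi_{n^3}(q),\dots,\Phi_{n^{2r-1}}(q)$ are pairwise distinct, hence pairwise coprime, so it is enough to verify each of \eqref{eq:main-3-1} and \eqref{eq:main-3-2} modulo each of these squared factors separately. For $\Phi_{n^{2r}}(q)^2$ there is nothing new to do: applying \eqref{eq:guozu-3}, together with its companion having upper limit $n^{2r}-1$, with $n$ replaced by the odd integer $n^{2r}$ identifies both left-hand sides with the claimed right-hand side modulo $\Phi_{n^{2r}}(q)^2$, and in particular shows that the denominator $(q^5;q^4)_{(n^{2r}-1)/2}$ is coprime to $\Phi_{n^{2r}}(q)$.

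Next fix $j\in\{1,\dots,r\}$ and put $N=n^{2j-1}$ and $L=n^{2r-2j+1}$, so that $n^{2r}=NL$ and $N\equiv 3\pmod 4$ (because $n\equiv -1\pmod 4$). First I would record the two elementary identities
\[
\frac{n^{2r}-1}{2}=\frac{L-1}{2}\cdot N+\frac{N-1}{2},\qquad n^{2r}-1=L\cdot N-1,
\]
where $\tfrac{L-1}{2}$ and $L$ are positive integers (here $L=n^{2r-2j+1}\geqslant n\geqslant 3$ is odd). These say exactly that the summation bounds $(n^{2r}-1)/2$ and $n^{2r}-1$ have the shapes $mN+(N-1)/2$ and $mN-1$ that occur in Theorem \ref{main-1}. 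Applying Theorem \ref{main-1} with $n\mapsto N$ and $m\mapsto\tfrac{L-1}{2}$ for \eqref{eq:main-3-1}, resp.\ $m\mapsto L$ for \eqref{eq:main-3-2}, we conclude that the left-hand sides of \eqref{eq:main-3-1} and \eqref{eq:main-3-2} are $\equiv 0\pmod{\Phi_{n^{2j-1}}(q)^2}$.

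It then remains to show that the right-hand side is likewise $\equiv 0\pmod{\Phi_{n^{2j-1}}(q)^2}$, which is the only genuinely computational point. With the same $N$ and $L$: the factor $1-q^{4k-1}$ of $(q^3;q^4)_{(n^{2r}-1)/2}$ is divisible by $\Phi_N(q)$ exactly when $k\equiv(N+1)/4\pmod N$, and the factor $1-q^{4k+1}$ of $(q^5;q^4)_{(n^{2r}-1)/2}$ exactly when $k\equiv(3N-1)/4\pmod N$; a short count with floor functions gives $(L+1)/2$ factors of the first kind and $(L-1)/2$ of the second. Since $[n^{2r}]_{q^2}=(1-q^{2n^{2r}})/(1-q^2)$ is divisible by $\Phi_N(q)$ exactly once, the reduced form of
\[
\frac{[n^{2r}]_{q^2}\,(q^3;q^4)_{(n^{2r}-1)/2}}{(q^5;q^4)_{(n^{2r}-1)/2}}\,q^{(1-n^{2r})/2}
\]
has numerator divisible by $\Phi_N(q)^{\,1+(L+1)/2-(L-1)/2}=\Phi_N(q)^2$ and denominator coprime to $\Phi_N(q)$; this also provides the remaining part of the denominator condition for the product modulus. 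Assembling the contributions of $\Phi_{n^{2r}}(q)^2$ and of $\Phi_{n^{2j-1}}(q)^2$ for $j=1,\dots,r$ via coprimality then yields the theorem.

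The hard part is not any new transformation formula — Watson's ${}_8\phi_7$ identity enters only through Theorem \ref{main-1} and \eqref{eq:guozu-3}, both already in hand — but the elementary bookkeeping of the last step: checking that the two floor-function counts come out to exactly $(L+1)/2$ and $(L-1)/2$ for every admissible pair $N,L$, that $\tfrac{L-1}{2}$ and $L$ are genuinely positive so that Theorem \ref{main-1} applies, and that the denominator of the right-hand side remains coprime to all $r+1$ cyclotomic factors simultaneously.
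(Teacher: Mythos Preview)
Your proof is correct and follows essentially the same three-step argument as the paper's own proof: apply \eqref{eq:guozu-3} with $n\mapsto n^{2r}$ for the modulus $\Phi_{n^{2r}}(q)^2$, use Theorem~\ref{main-1} for each $\Phi_{n^{2j-1}}(q)^2$, and count factors of $\Phi_{n^{2j-1}}(q)$ in the right-hand side. The only difference is that you spell out in detail the choices of $m$ and the factor counts $(L+1)/2$ and $(L-1)/2$, whereas the paper leaves these as ``similarly as before'' and ``we can easily deduce''.
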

\begin{proof}
Replacing $n$ by $n^{2r}$ in \eqref{eq:guozu-3} and its equivalent form, we see that \eqref{eq:main-3-1} and \eqref{eq:main-3-2}
are true modulo $\Phi_{n^{2r}}(q)^2$.  Similarly as before, we can show that
\begin{align*}
\dfrac{[n^{2r}]_{q^2}(q^3;q^4)_{(n^{2r}-1)/2}} {(q^5;q^4)_{(n^{2r}-1)/2}} q^{(1-n^{2r})/2}
\equiv 0\pmod{\prod_{j=1}^{r}\Phi_{n^{2j-1}}(q)^2}.
\end{align*}
Further, by Theorem \ref{main-1}, we can easily deduce that
the left-hand sides of \eqref{eq:main-3-1} and \eqref{eq:main-3-2}
are also congruent to $0$ modulo $\prod_{j=1}^{r}\Phi_{n^{2j-1}}(q)^2$.
\end{proof}

Let $n=p\equiv 3\pmod{4}$ be a prime and taking $q\to1$ in Theorem \ref{main-3}, we are led to the following result.
\begin{corollary}Let $p\equiv 3\pmod{4}$ be a prime and let $r\geqslant 1$. Then
\begin{align}
\sum_{k=0}^{(p^{2r}-1)/2} \frac{(\frac{1}{2})_k^3}{k!^3}
&\equiv p^{2r}\frac{(\frac{3}{4})_{(p^{2r}-1)/2}}{(\frac{5}{4})_{(p^{2r}-1)/2}} \pmod{p^{2r+2}}, \label{eq:hnew-1}    \\[5pt]
\sum_{k=0}^{p^{2r}-1} \frac{(\frac{1}{2})_k^3}{k!^3}
&\equiv p^{2r}\frac{(\frac{3}{4})_{(p^{2r}-1)/2}}{(\frac{5}{4})_{(p^{2r}-1)/2}} \pmod{p^{2r+2}}.  \label{eq:hnew-2}
\end{align}
\end{corollary}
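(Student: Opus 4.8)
The plan is to obtain both congruences by setting $n=p$ in Theorem~\ref{main-3} and letting $q\to1$. Since $\Phi_{p^{j}}(q)=1+q^{p^{j-1}}+q^{2p^{j-1}}+\cdots+q^{(p-1)p^{j-1}}$, we have $\Phi_{p^{j}}(1)=p$ for every $j\geqslant1$, so the modulus $\Phi_{n^{2r}}(q)^{2}\prod_{j=1}^{r}\Phi_{n^{2j-1}}(q)^{2}$ of Theorem~\ref{main-3} specializes at $q=1$ to $p^{2}\cdot(p^{2})^{r}=p^{2r+2}$, which is the right modulus. It then remains to identify the limits of the two sides and to check that the $q$-congruence descends.

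For the left-hand sides of \eqref{eq:main-3-1}--\eqref{eq:main-3-2} one uses, for each fixed $k$,
\begin{align*}
\lim_{q\to1}\frac{(q^{2};q^{4})_{k}}{(q^{4};q^{4})_{k}}=\prod_{j=0}^{k-1}\frac{2+4j}{4+4j}=\frac{(\tfrac12)_{k}}{k!},\qquad
\lim_{q\to1}\frac{1+q^{4k+1}}{1+q}=1,
\end{align*}
so they tend to $\sum_{k}(\tfrac12)_{k}^{3}/k!^{3}$ over the respective ranges. For the right-hand sides, $[n^{2r}]_{q^{2}}\to p^{2r}$ and $q^{(1-n^{2r})/2}\to1$, while $(q^{3};q^{4})_{(p^{2r}-1)/2}$ and $(q^{5};q^{4})_{(p^{2r}-1)/2}$ both vanish at $q=1$ to order $(p^{2r}-1)/2$, so those zeros cancel and
\begin{align*}
\lim_{q\to1}\frac{(q^{3};q^{4})_{(p^{2r}-1)/2}}{(q^{5};q^{4})_{(p^{2r}-1)/2}}
=\prod_{k=1}^{(p^{2r}-1)/2}\frac{4k-1}{4k+1}=\frac{(\tfrac34)_{(p^{2r}-1)/2}}{(\tfrac54)_{(p^{2r}-1)/2}}.
\end{align*}
Thus the limits of the two sides of Theorem~\ref{main-3} are precisely the two sides of \eqref{eq:hnew-1}--\eqref{eq:hnew-2}.

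To justify the descent, write Theorem~\ref{main-3} as an identity $F(q)-G(q)=\bigl(\Phi_{n^{2r}}(q)^{2}\prod_{j=1}^{r}\Phi_{n^{2j-1}}(q)^{2}\bigr)H(q)$ in $\mathbb{Q}(q)$. The key point is that, in lowest terms, the denominators of $F(q)$ and $G(q)$ — hence of $H(q)$ — contain no factor $\Phi_{p^{a}}(q)$ with $a\geqslant1$. For $F$ this follows from $\dfrac{(q^{2};q^{4})_{k}}{(q^{4};q^{4})_{k}}=\dbinom{2k}{k}_{\!q^{2}}\Big/\prod_{j=1}^{k}(1+q^{2j})^{2}$, a consequence of $(q^{2};q^{2})_{2k}=(q^{2};q^{4})_{k}(q^{4};q^{4})_{k}$, together with the fact that $\Phi_{p^{a}}(q)\nmid 1+q^{2j}$ for any $j\geqslant1$ (otherwise $p^{a}\mid 4j$ while $p^{a}\nmid 2j$). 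For $G$ it follows from a factor count: the numerator $[p^{2r}]_{q^{2}}(q^{3};q^{4})_{(p^{2r}-1)/2}$ is divisible by each $\Phi_{p^{a}}(q)$ to at least the power to which $(q^{5};q^{4})_{(p^{2r}-1)/2}$ is — for $1\leqslant a\leqslant 2r$ this refines the very count already used in the proof of Theorem~\ref{main-3} (extended from $a=1,3,\dots,2r$ to all $a\leqslant2r$), and for $a>2r$ the denominator simply has no such factor because $p^{a}$ exceeds every exponent occurring. Granting this, every cyclotomic factor surviving in the denominator of $H$ is a $\Phi_{m}(q)$ with $m$ not a power of $p$, so $\Phi_{m}(1)$ is prime to $p$; since $H$ is regular at $q=1$ (all the limits above are finite) and $\bigl(\Phi_{n^{2r}}^{2}\prod_{j}\Phi_{n^{2j-1}}^{2}\bigr)(1)=p^{2r+2}\neq0$, we get $H(1)\in\mathbb{Z}_{(p)}$, whence $F(1)-G(1)=p^{2r+2}H(1)\equiv0\pmod{p^{2r+2}}$.

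The main obstacle is this denominator bookkeeping for $G$; it is the same kind of computation with the quantities $f_{p^{a}}(x)$ used in the proof of Theorem~\ref{main-1}, carried out for every power of $p$ instead of just for $n$. Everything else is a routine passage to the limit. I note that this argument needs only that $(\tfrac34)_{(p^{2r}-1)/2}/(\tfrac54)_{(p^{2r}-1)/2}$ is a $p$-adic integer, which is much weaker than Conjecture~\ref{conj:first} (that this quotient is $\equiv1\pmod{p^{2}}$); that conjecture is not invoked.
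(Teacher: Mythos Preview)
Your proposal is correct and follows exactly the route the paper takes: set $n=p$ in Theorem~\ref{main-3} and let $q\to1$, using $\Phi_{p^{j}}(1)=p$ to obtain the modulus $p^{2r+2}$. The paper states this in a single sentence without justification, whereas you supply the denominator bookkeeping (no $\Phi_{p^{a}}(q)$ in the reduced denominators of either side, hence $H(1)\in\mathbb{Z}_{(p)}$) that makes the passage to $q=1$ rigorous; this extra care is warranted and does not deviate from the paper's intended argument.
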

If Conjecture \ref{conj:first} is true, then \eqref{eq:hnew-1} means that \eqref{eq:h3-new}
holds modulo $p^{2r+2}$ for any odd prime $p$.

It is known that $q$-analogues of supercongruences are usually not unique. See, for example, \cite{GG}.
The author and Zudilin \cite[Conjecture 1]{GuoZu2} also gave another $q$-analogue of \eqref{eq:liu},  which still remains open.
\begin{conjecture}[Guo and Zudilin] \label{conj:fina-1}
Let $m$ and $n$ be positive integers with $n\equiv 3\pmod{4}$. Then
\begin{align}
\sum_{k=0}^{mn-1}\frac{(q;q^2)_k^2(q^2;q^4)_k}{(q^2;q^2)_k^2(q^4;q^4)_k}\,q^{2k}
&\equiv 0 \pmod{\Phi_n(q)^2},  \label{eq:final}
\\
\sum_{k=0}^{mn+(n-1)/2}\frac{(q;q^2)_k^2(q^2;q^4)_k}{(q^2;q^2)_k^2(q^4;q^4)_k}\,q^{2k}
&\equiv 0 \pmod{\Phi_n(q)^2}.  \notag
\end{align}
\end{conjecture}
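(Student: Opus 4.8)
The plan is to follow the blueprint of Section~2. As there, it is enough to establish the second congruence in Conjecture~\ref{conj:fina-1}, the one with the larger summation bound $N=mn+(n-1)/2$; the first congruence, with bound $mn-1$, then follows upon replacing $m$ by $m-1$ in the second, since each additional summand---those with $(m-1)n+(n-1)/2<k\leqslant mn-1$---has, for $n\equiv 3\pmod 4$, exactly three more numerator factors of the form $1-q^{an}$ ($a$ an integer) than denominator factors, and is therefore divisible by $\Phi_n(q)^3$, exactly as in the last paragraph of Section~2 where \eqref{eq:guozu-2} yields \eqref{eq:guozu-1}.

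For the bound $N=mn+(n-1)/2$ I would write the truncated sum as a terminating basic hypergeometric series and then introduce an auxiliary parameter $a$, in the spirit of Guo and Zudilin's \emph{creative microscoping}. Concretely, one looks for a family such as
\[
S_N(a)=\sum_{k=0}^{N}\frac{(aq;q^2)_k\,(q/a;q^2)_k\,(q^2;q^4)_k}{(aq^2;q^2)_k\,(q^2/a;q^2)_k\,(q^4;q^4)_k}\,q^{2k},
\]
whose $a\to 1$ specialization is the left-hand side of \eqref{eq:final} with bound $N$, and one tries to prove $S_N(a)\equiv 0\pmod{(1-aq^n)(a-q^n)}$, regarding $S_N(a)$ as a rational function of $a$. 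Since $1-aq^n$ and $a-q^n$ are coprime as polynomials in $a$, it suffices to check that $S_N(a)$ vanishes at $a=q^{-n}$ and at $a=q^{n}$. At either value one of the numerator $q$-shifted factorials (say $(q^{1-n};q^2)_k$ when $a=q^{-n}$) terminates the sum to $k\leqslant(n-1)/2$, and one is left to evaluate that short terminating sum to $0$; a very-well-poised ${}_6\phi_5$ summation, or a $q$-analogue of Dixon's theorem, possibly combined with a quadratic transformation to handle the mixed bases $q^2$ and $q^4$, is the natural tool. Specializing $a=1$ then gives divisibility by $(1-q^n)^2$, hence by $\Phi_n(q)^2$. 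Alternatively, one could try, as in Section~2, to recast the sum directly as a very-well-poised ${}_8\phi_7$ and apply Watson's transformation \eqref{eq:8phi7}, so that the prefactor of the resulting ${}_4\phi_3$ is visibly divisible by $\Phi_n(q)^2$ after counting factors $1-q^{an}$ in numerator versus denominator, while the residual series has all denominators coprime to $\Phi_n(q)$.

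The main obstacle is that, unlike the sum in Theorem~\ref{main-1}, the sum in \eqref{eq:final} carries no visible very-well-poised factor such as $1+q^{4k+1}$, so Watson's ${}_8\phi_7$ transformation does not apply transparently and it is not clear in advance which classical evaluation controls the specializations $a=q^{\pm n}$. Identifying the right deformation---likely one that exploits the two bases $q^2$ and $q^4$ through a quadratic transformation, or a two-parameter extension reducing to a known very-well-poised sum---is where the real work lies. Once such an identity is available, the remaining $\Phi_n(q)$-adic bookkeeping (the analogue of computing $f_n(2),f_n(3),\dots$ in Section~2) and the reduction from the larger to the smaller summation bound are routine.
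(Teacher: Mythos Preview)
The statement you are treating is Conjecture~\ref{conj:fina-1}, which the paper explicitly introduces as ``another $q$-analogue of \eqref{eq:liu}, which \emph{still remains open}.'' There is therefore no proof in the paper to compare against; the author records it precisely because the method of Section~2 does not go through.

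Your write-up reflects this honestly: the only part that is actually carried out is the reduction from the second congruence (bound $mn+(n-1)/2$) to the first (bound $mn-1$), and that part is correct---the $\Phi_n(q)$-adic count does give an excess of three in each tail summand, just as in the last paragraph of Section~2. But the heart of the matter, proving the congruence with the larger bound, is left as a plan rather than an argument. You propose either a creative-microscope deformation $S_N(a)$ or a Watson-type transformation, yet you yourself flag the obstruction: the summand in \eqref{eq:final} has no very-well-poised factor of the shape $(1+q^{4k+1})$, so it is not an $_8\phi_7$ in any evident way, and it is unclear which terminating evaluation would force $S_N(q^{\pm n})=0$. Saying that ``identifying the right deformation \dots\ is where the real work lies'' is an accurate diagnosis, but it means the proposal is a strategy outline, not a proof. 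Until a concrete identity---a quadratic transformation, a specific ${}_6\phi_5$ or Dixon-type summation handling the mixed bases $q^2,q^4$---is exhibited and checked, the argument has the same gap that keeps Conjecture~\ref{conj:fina-1} open in the paper.
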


The author and Zudilin \cite[Theorem 2]{GuoZu2} themselves have proved \eqref{eq:final} for the $m=1$ case.
Motivated by Conjecture  \ref{conj:fina-1}, we would like to give the following new conjectural $q$-analogues of
\eqref{eq:h3-1} and \eqref{eq:h3-2}.

\begin{conjecture}
Let $n\equiv 3\pmod{4}$ be a positive integer. Then, modulo $\Phi_n(q)^2\Phi_{n^2}(q)^2$, we have
\begin{align*}
\sum_{k=0}^{(n^2-1)/2}\frac{(q;q^2)_k^2(q^2;q^4)_k}{(q^2;q^2)_k^2(q^4;q^4)_k}q^{2k}
&\equiv \dfrac{[n^2](q^3;q^4)_{(n^2-1)/2}} {(q^5;q^4)_{(n^2-1)/2}}, \\[5pt]
\sum_{k=0}^{n^2-1}\frac{(q;q^2)_k^2(q^2;q^4)_k}{(q^2;q^2)_k^2(q^4;q^4)_k}q^{2k}
&\equiv \dfrac{[n^2](q^3;q^4)_{(n^2-1)/2}} {(q^5;q^4)_{(n^2-1)/2}}.
\end{align*}
\end{conjecture}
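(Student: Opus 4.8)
We sketch a possible approach to this conjecture.

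The plan is to mimic the derivation of Theorem~\ref{main-2}, in which the modulus $\Phi_n(q)^2\Phi_{n^2}(q)^2$ was split via the coprimality of $\Phi_n(q)$ and $\Phi_{n^2}(q)$: there the $\Phi_{n^2}(q)^2$ part came from the base case \eqref{eq:guozu-3} with $n$ replaced by $n^2$, while the $\Phi_n(q)^2$ part came from Theorem~\ref{main-1} together with a factor count on the right-hand side. The base case needed here is the analogue of \eqref{eq:guozu-3} for the (non-well-poised) series at hand: one must show that, for every positive odd integer $N$,
\begin{equation}
\sum_{k=0}^{(N-1)/2}\frac{(q;q^2)_k^2(q^2;q^4)_k}{(q^2;q^2)_k^2(q^4;q^4)_k}q^{2k}
\equiv\frac{[N]\,(q^3;q^4)_{(N-1)/2}}{(q^5;q^4)_{(N-1)/2}}\pmod{\Phi_N(q)^2},
\label{eq:plan-base}
\end{equation}
together with the same congruence with the sum extended to $k$ from $0$ to $N-1$. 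The extension is immediate: for $(N-1)/2<k\leqslant N-1$ the numerator $(q;q^2)_k^2(q^2;q^4)_k$ gains three factors of the form $1-q^{aN}$ (two from the square and one from $(q^2;q^4)_k$) while the denominator gains none, so the additional terms are $\equiv0\pmod{\Phi_N(q)^3}$. Granting \eqref{eq:plan-base} and putting $N=n^2$ (legitimate since $n^2$ is odd) yields both congruences of the conjecture modulo $\Phi_{n^2}(q)^2$.

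To prove \eqref{eq:plan-base} I would first try to recast the truncated sum as a terminating basic hypergeometric series in the base $q^4$ and apply a suitable transformation or summation formula, in the spirit of Guo and Zudilin's treatment of the $m=1$ case of \eqref{eq:final}; the claimed right-hand side should then emerge after reducing modulo $\Phi_N(q)^2$. Since the series is not very well poised, Watson's ${}_8\phi_7$ formula \eqref{eq:8phi7} may not apply as cleanly as it did in Sections~2 and~3, in which case I would instead look for a $q$-analogue of a Wilf--Zeilberger pair certifying the difference of the two sides of \eqref{eq:plan-base}, and check that the accompanying function has denominator coprime to $\Phi_N(q)$.

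For the modulus $\Phi_n(q)^2$ the argument would copy the proof of Theorem~\ref{main-2}. On the right-hand side of the conjecture, $[n^2]=(1-q^{n^2})/(1-q)$ is divisible by $\Phi_n(q)$, and, as counted in that proof, $(q^3;q^4)_{(n^2-1)/2}$ contains $(n+1)/2$ factors of the form $1-q^{an}$ whereas $(q^5;q^4)_{(n^2-1)/2}$ contains only $(n-1)/2$ of them; hence the right-hand side is $\equiv0\pmod{\Phi_n(q)^2}$. For the left-hand sides, write $(n^2-1)/2=\tfrac{n-1}{2}\cdot n+\tfrac{n-1}{2}$ and $n^2-1=n\cdot n-1$: the required vanishing modulo $\Phi_n(q)^2$ is then exactly the $m=\tfrac{n-1}{2}$ instance of the second congruence, and the $m=n$ instance of the first congruence, in Conjecture~\ref{conj:fina-1}. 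Combining the information modulo $\Phi_n(q)^2$ with that modulo $\Phi_{n^2}(q)^2$ via coprimality would complete the proof.

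The main obstacle is the last step, which rests on Conjecture~\ref{conj:fina-1} in cases with $m>1$; this remains open (only $m=1$ is known), so a proof along these lines would require, at the least, the $m=\tfrac{n-1}{2}$ and $m=n$ cases of that conjecture. The base case \eqref{eq:plan-base} for general odd $N$ is the secondary gap---it is currently clear only for $N\equiv3\pmod4$, where its right-hand side already vanishes modulo $\Phi_N(q)^2$ and the statement reduces to the $m=1$ case of \eqref{eq:final} and its truncation---but I expect \eqref{eq:plan-base} to be the more tractable of the two missing ingredients.
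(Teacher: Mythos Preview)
The paper does not prove this statement: it is presented purely as a conjecture, with no argument offered. So there is no ``paper's own proof'' to compare against.

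Your outline is the natural one and parallels the proof of Theorem~\ref{main-2} correctly. The factor counts you give (three copies of $\Phi_N(q)$ in the extra terms for $(N-1)/2<k\leqslant N-1$; the right-hand side being divisible by $\Phi_n(q)^2$ via $[n^2]$ and the imbalance between $(q^3;q^4)_{(n^2-1)/2}$ and $(q^5;q^4)_{(n^2-1)/2}$) are accurate, and your identification of the required instances of Conjecture~\ref{conj:fina-1} ($m=(n-1)/2$ for the second congruence there, $m=n$ for the first) is correct.

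That said, the two gaps you flag are exactly why the statement is still a conjecture. The $\Phi_n(q)^2$ part genuinely requires Conjecture~\ref{conj:fina-1} for $m>1$, which the paper explicitly labels as open (only $m=1$ is known). The $\Phi_{n^2}(q)^2$ part requires your base case \eqref{eq:plan-base} for $N=n^2\equiv1\pmod 4$, and in that residue class the right-hand side does not vanish, so the $m=1$ case of \eqref{eq:final} is of no help; one would need a genuine analogue of the Guo--Zudilin result \eqref{eq:guozu-3} for this non-very-well-poised series, which is not available. Thus your proposal is a sound reduction of one conjecture to two others, not a proof---and you say as much yourself.
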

There are similar new such $q$-analogues of \eqref{eq:hnew-1} and \eqref{eq:hnew-2}.
We omit them here and leave space for the reader's imagination.

\end{document}